\theoremstyle{definition}
\newtheorem{theorem}{Theorem}[section]
\theoremstyle{definition}
\theoremstyle{definition}
\newtheorem{lemma}[theorem]{Lemma}
\newtheorem{corollary}[theorem]{Corollary}
\newtheorem{example}[theorem]{Example}
\newtheorem{remark}[theorem]{Remark}
\newcommand{\gcolor}[1]{\textcolor{blue}{#1}}
\journal{Statistics and Probability Letters}
\begin{document}

\begin{frontmatter}



\title{A Phase Transition Phenomenon for Ruin Probabilities in a Network of Agents and Objects}

\author[1]{Rukuang Huang}
\ead{rukuang.huang@jesus.ox.ac.uk}

%

\address[1]{{Big Data Institute, University of Oxford},
           {Li Ka Shing Centre for Health Information and Discovery, Old Road Campus},
           {Oxford},
            {OX3 7LF},
            {United Kingdom}}




\begin{abstract}
The classical Cramér-Lundberg risk process models the ruin probability of
an insurance company experiencing an incoming cash flow - the premium income, and an outgoing cash flow - the claims. 
From a system's viewpoint, the web of insurance agents and risk objects can be represented by a bipartite network. In such a bipartite network setting, it has been shown that joint ruin of a group of agents
may be avoided even if individual agents would experience ruin in the classical Cramér-Lundberg model. This paper describes and examines a phase transition phenomenon for these ruin probabilities.
\end{abstract}







\end{frontmatter}


\section{Introduction}
\label{Intro}
Systemic risk  in an insurance market reflects its vulnerability 
to events such as earthquakes, epidemics (COVID-19) and tsunami, through cascading losses in an inter-related system of agents and objects. This  system can be represented as
a bipartite network of interacting agents such as insurance companies,
or different business lines of an insurance company, 
and objects, with an edge between an agent and an object if the agent insures that object.

Ignoring network effects, if the system consists of only one agent and  one object, a classical model for ruin is the 
Cramér-Lundberg model, see for example \cite{asmussen2010ruin}, where also multivariate non-network based  
extensions can be found.
From a network viewpoint, as the strategies of agents are usually unknown, a model for a bipartite insurance market assumes that edges are modelled as random.
This paper  builds on the results of \cite{ruinprobgesine}, in which, under a bipartite random network and an exponential system, an expression for the summative ruin of a group $Q$ of agents is 
derived in terms of the so-called {\it network Pollaczek-Khintchine random variable} $P^Q$, see \eqref{ruinparameter} below; when $\mathbbm{P}(P^Q \ge 1) = 1$ then the summative ruin probability of the group $Q$ is 1; otherwise it is less than 1.
The set $Q$ can be thought of as a collection of agents in which resources can be transferred between agents, e.g. between different business line of a company. In this paper we show that this model undergoes a phase transition  - There is a threshold, which depends on the parameters of the model, above which $\mathbbm{P}(P^Q < 1) \rightarrow 1$, and below which  $\mathbbm{P}(P^Q < 1) \rightarrow 0$. 
In the regime $\mathbbm{P}(P^Q < 1) \rightarrow 1$, the model could be used to optimize the size of $Q$ for minimising the summative ruin probability.

This paper is structured as follows. Section \ref{setting} sets up the model. 
Section \ref{phase_transition} describes the phase transition phenomenon and provides a  theoretical explanation for it, as well as 
an asymptotic result in a moderately dense network when the number of objects $d$ tends to infinity while $Q$ is fixed. {Finally, t}he results are illustrated by a bipartite Bernoulli random graph.

\section{Model setting}
\label{setting}
For the ruin model, suppose we have $q$ agents and $d$ objects. For each object $j \in \{1,...,d\}$, $c_j$ is the constant premium rate, $\{N_j(t)\}_{t\geq 0}$ is a Poisson process with intensity $\lambda>0$ which models the arrivals of the claim, independent of all other random variables, and $\{X_j(k)\}_{k\in \mathbb{N}}$ are exponentially distributed and i.i.d. positive claim sizes with finite mean $\mu_j$. We define the \textit{insurance risk process} $V_j$ for object $j$ as 
$$
    V_j(t) = \sum_{k=1}^{N_j(t)} X_j(k) - c_j t, \quad t\geq 0 . 
$$
We assume that the claim arrival processes and claim size distributions are independent across $j$, so that the $V_j$'s are independent.

Next, we construct a random undirected weighted bipartite network, which is independent of the insurance risk processes for all $j$, between the agents and the objects. Here, agents insure objects but agents do not insure each other. Let the edge indicator $\mathbbm{1}\{i\sim j\}$ denote whether there is an edge between agent $i$ and object $j$, an edge representing that agent $i$ insured object $j$. 
Let $Q\subseteq\{1,\dots,q\}$ be nonempty and $j\in \{1,\dots,d\}$.
Let
$$\mathbbm{1}\{Q\sim j\} := \max_{i\in Q} \mathbbm{1}\{i \sim j\}
.
$$
Then the weighted adjacency matrix $A$ of the bipartite network is $  A = (A^i_j)_{\substack{i=1,\dots,q\\j=1,\dots,d}}$, where 
$$
    A^i_j = \mathbbm{1}\{i\sim j\} W^i_j, 
\mbox{ with } 
    W^i_j = \frac{\mathbbm{1}\{Q\sim j\}r^Q}{\sum_{k\in Q} \mathbbm{1}\{k\sim j\} \mu_j}, \quad \frac{0}{0} := 0.
$$
Here $r^Q$ is a constant depending only on $Q$, 
such that 
\begin{equation}
\label{weightcondition}
    0\leq \sum_{i=1}^q A^i_j = \sum_{i=1}^q \mathbbm{1}\{i\sim j\}W^i_j \leq 1 \textrm{ for each } j.
\end{equation}
It is easy to prove that 
\begin{equation}
\label{rQ}
    r^Q = \frac{\min_j \mu_j}{q-|Q|+1}
\end{equation}
satisfies \eqref{weightcondition}. 
Although agents in $Q$ connected to object $j$ share an equal amount of the loss, the amount is inversely proportional to the expected claim size of object $j$ and there could potentially be uninsured loss. In \cite{ruinprobgesine}, these weights are thus called \textit{proportional weights}.

Each object $j$ is assigned its own insurance risk process $V_j$ independently and similarly each agent $i$ is assigned its portfolio $R^{(i)}$, given by
$$
    R^{(i)}(t) := \sum_{j=1}^d A^i_j V_j(t), \text{for } t\geq 0.
$$
Thus, for
each agent $i$, $R^{(i)}$ is a weighted sum of the portfolio losses of the objects which are connected to agent $i$, with $W^i_j$ as the weights. If $i$ is connected to $j$, then $W^i_j$ represents the proportion of loss due to object $j$ suffered by agent $i$. 
By construction, the components of $R(t) = (R^{(1)}(t),\dots,R^{(q)}(t))$ are not independent. 

The main object of interest is the ruin probability for the sum of the ruins 
in $Q$, defined as (\cite{ruinprobgesine})
$$
    \Psi^Q(u) := \mathbb{P}\left(\sum_{i\in Q} (R^{(i)}(t)- u^{(i)}) \geq 0 \text{ for some } t\geq 0\right)
$$
where $0 \le u^{(i)} < \infty, i=1, \ldots, q$ are the risk reserves for the agents.
In \cite{ruinprobgesine} the following result is shown.
\begin{theorem}[Theorem 4.1 in \cite{ruinprobgesine}]
Let $\sum_{i\in Q} u^{(i)} > 0.$ 
Then
\begin{equation}
\label{ruinexpression}
    \Psi^Q(u)=\mathbb{P}(P^Q <1)\mathbb{E}\left[P^Q e^{-\frac{1-P^Q}{r^Q}\sum_{i\in Q}u^{(i)}}|P^Q<1\right] +\mathbb{P}(P^Q\ge 1),
\end{equation}
where $P^Q$, the network Pollaczek–Khintchine random variable, is given by
\begin{equation}
\label{ruinparameter}
    P^Q=\lambda \frac{\sum_{j=1}^d \mathbb{I}\{Q\sim j\}}{\sum_{j=1}^d \mathbb{I}\{Q\sim j\} c_j/\mu_j}, \quad\frac{0}{0}:=0.
\end{equation}
\end{theorem}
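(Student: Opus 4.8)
The plan is to condition on the random bipartite network and reduce the summed loss process to a single classical one–dimensional Cramér–Lundberg risk process, for which the ruin probability with exponential claims is available in closed form. Write $S^Q(t) := \sum_{i\in Q} R^{(i)}(t) = \sum_{j=1}^d \big(\sum_{i\in Q} A^i_j\big) V_j(t)$ and $U^Q := \sum_{i\in Q} u^{(i)}$, so that $\Psi^Q(u) = \mathbb{P}(S^Q(t) \geq U^Q \text{ for some } t \geq 0)$. The first step is the algebraic simplification of the aggregated weights: using the definition of $W^i_j$ together with $\sum_{i\in Q}\mathbbm{1}\{i\sim j\} = \sum_{k\in Q}\mathbbm{1}\{k\sim j\}$ and the convention $0/0:=0$, the column sum collapses to $\sum_{i\in Q} A^i_j = \mathbbm{1}\{Q\sim j\}\, r^Q/\mu_j$, whence $S^Q(t) = r^Q \sum_{j:\,Q\sim j} \mu_j^{-1} V_j(t)$.

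The key step is to identify the conditional law of $S^Q$ given the network. Expanding $V_j$, the process $S^Q(t)$ equals an aggregated claim process minus the linear drift $r^Q t \sum_{j:Q\sim j} c_j/\mu_j$, where the aggregated claims form a superposition, over the $m := \sum_{j} \mathbbm{1}\{Q\sim j\}$ connected objects, of independent compound Poisson processes of rate $\lambda$ with claims $\tfrac{r^Q}{\mu_j} X_j(k)$. Here I would exploit the defining feature of the model: since $X_j(k)$ is exponential with mean $\mu_j$, the rescaled claim $\tfrac{r^Q}{\mu_j} X_j(k)$ is exponential with mean $r^Q$ for \emph{every} $j$. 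Consequently the superposition is itself a compound Poisson process of rate $m\lambda$ with i.i.d. exponential jumps of mean $r^Q$, so that, conditionally on the network, $S^Q$ is a classical Cramér–Lundberg process with traffic intensity
$$\rho = \frac{m\lambda \cdot r^Q}{r^Q \sum_{j:Q\sim j} c_j/\mu_j} = \lambda\,\frac{\sum_{j} \mathbbm{1}\{Q\sim j\}}{\sum_{j} \mathbbm{1}\{Q\sim j\}\, c_j/\mu_j} = P^Q.$$

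It then remains to invoke the exact ruin formula for exponential claims. When $P^Q < 1$ the net–profit condition holds and the conditional ruin probability equals $P^Q \exp\!\big(-\tfrac{1-P^Q}{r^Q} U^Q\big)$; when $P^Q \geq 1$ the drift is non-positive and ruin is almost sure, giving conditional probability $1$ (the degenerate case $m=0$, i.e.\ $P^Q=0$, gives $0$, consistent with the formula precisely because $U^Q>0$). Finally, since the network is independent of all $V_j$, I would take the expectation over the network by the tower property, splitting on $\{P^Q<1\}$ and $\{P^Q\geq 1\}$,
$$\Psi^Q(u) = \mathbb{E}\big[\mathbbm{1}\{P^Q<1\}\, P^Q e^{-\frac{1-P^Q}{r^Q} U^Q}\big] + \mathbb{P}(P^Q \geq 1),$$
and rewriting the first expectation as $\mathbb{P}(P^Q<1)$ times the corresponding conditional expectation yields \eqref{ruinexpression}. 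The main obstacle is the second step: the entire reduction hinges on exponential claims being closed under the scaling by $r^Q/\mu_j$, so that after aggregation the claims become i.i.d.\ with the common mean $r^Q$; this is exactly what allows the superposed loss process to be a single Cramér–Lundberg process and makes the explicit Pollaczek–Khintchine formula applicable.
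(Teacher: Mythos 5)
The paper states this result without proof, citing it directly as Theorem~4.1 of \cite{ruinprobgesine}, so there is no in-paper argument to compare against. Your reconstruction is correct and is essentially the argument of the cited source: the column sums collapse to $\sum_{i\in Q}A^i_j=\mathbbm{1}\{Q\sim j\}\,r^Q/\mu_j$, the rescaled claims $\tfrac{r^Q}{\mu_j}X_j(k)$ are all exponential with the common mean $r^Q$ (this is precisely the point of the proportional weights), so conditionally on the graph the aggregated process is a single Cram\'er--Lundberg process with Pollaczek--Khintchine parameter $P^Q$, and the exact exponential-claims ruin formula plus the tower property over the independent network gives \eqref{ruinexpression}. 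You also correctly handle the degenerate case $m=0$ via $\sum_{i\in Q}u^{(i)}>0$; I see no gap.
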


In this paper, we assume that the network between agents and objects is generated by a 
{\it stochastic blockmodel}, which is a standard model for networks, see for example Chapter 12 in \cite{newman2018networks}. In this model, there are $K$ types of agents and $L$ types of objects. Let $s(i)$ be the type of agent $i$ and assume $\mathbb{P}(s(i)=k) = w_k, \forall k \in \{1,\dots,K\}$, independently with $\sum_{k=1}^K w_k=1$. Similarly,  for each object $j$, independently, $\mathbb{P}(t(j)=l) = v_l, \forall l\in \{1,\dots,L\}$, with $\sum_{l=1}^Lv_l=1$. Further, we let  $\mathbb{P}(i\sim j|s(i)=k,t(j)=l) = p_{kl}$ for each $k$ and $l$; edge probabilities between two objects or between two agents are set to be zero.   The edge indicators are assumed to be mutually independent given the type configuration. Hence by conditioning on the type configuration, for each $i$ and $j$, $\mathbb{P}(i\sim j) = \sum_{k=1}^K\sum_{l=1}^L p_{kl}w_k v_l$. 
In particular, let $C=\{s(1),\ldots,s(q),t(1),\ldots,t(d)\}$ be the random type configuration of agents and objects, and let $c$ be a realisation of $C$. Then  
\begin{align*}
\mathbb{P}(Q\sim j) 
&= \sum_c \left(1-\prod_{i\in Q}(1-\mathbb{P}(i\sim j|C=c))\right)\mathbb{P}(C=c)\\
&= \sum_{k_1=1}^K \dots \sum_{k_{|Q|=1}}^K \sum_{l=1}^L \left(1-\prod_{i\in Q} (1-p_{k_i l})\right)v_l w_{k_1}...w_{k_{|Q|}}.
\end{align*}

\section{A phase transition phenomenon}
\label{phase_transition}
The ruin probability $\Psi^Q(u)$ is crucial for assessing whether or not the group $Q$ of agents jointly experiences ruin. Often it is impossible to compute \eqref{ruinexpression} exactly, so we employ the following Monte-Carlo estimator for $\Psi^Q(u)$:
\begin{equation}
    \widehat{\Psi^Q(u)}=\frac{1}{B}\sum_{b=1}^B \left[\mathbb{I}\{P^Q_b<1\}P^Q_b e^{-\frac{1-P^Q_b}{r^Q}\sum_{i\in Q}u^i}+\mathbb{I}\{P^Q_b>1\}\right],
\end{equation}
where $P^Q_b$ is a sample of $P^Q$ obtained by simulating the random network and computing \eqref{ruinparameter}, and $B$ is the number of Monte-Carlo samples.

Here we simulate the underlying network as a bipartite Bernoulli network with edge probability $0.5$. We take $q=d=10, u^i=1,\mu_j=1,\lambda=1$, 
and $r^Q$ is 
as in \eqref{rQ}.  
Furthermore, we assume that there are only two different values of $c_j$'s, being 0.95 and 1.05. Without underlying network, $\mu_j=1$ and $c_j<1$ yields as ruin parameter, for object $j$, $\rho = \lambda \mu_j/c_j > 1 $  so that $ \mathbb{P} (R_j(t)  \ge u \mbox{ for some } t > 0 ) = 1$ for all $u>0$ (Proposition 1.1 in \cite{asmussen2010ruin}).  Thus, if an agent only insures object $j$ and is the only agent to insure object $j$, then this agent would experience ruin.

\begin{figure}
    \centering
    \includegraphics[width = \linewidth,height=2.9in]{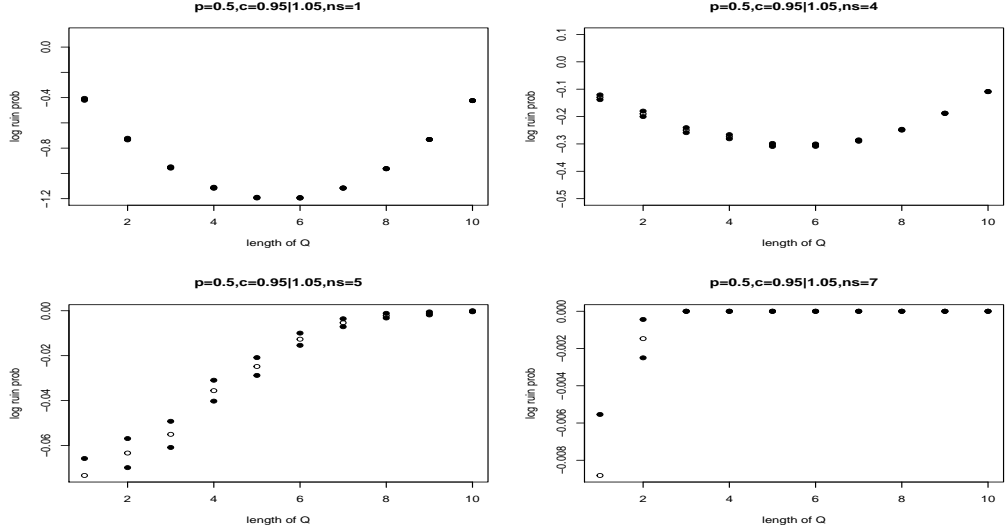}
    \caption{Logarithm of ruin probabilities in a bipartite Bernoulli network with $p=0.5$ plotted against the size of $Q$. The unfilled circles are the Monte-Carlo estimates and the filled circles are the 95\% confidence intervals.}
    \label{phase transition}
\end{figure}

Figure \ref{phase transition} shows the result of the simulations.
In the titles of the plots, $ns := \sum_{j=1}^d \mathbbm{1}\{c_j=0.95\}$ is the number of objects experiencing ruin in the univariate case. From left to right and from top to bottom, we increase $ns$. In the top two panels, the ruin probability achieves its minimum for a size of $Q$ which is neither all agents nor a singleton, indicating the possibility for selection of $Q$ which minimises the ruin probability. In the bottom two panels, the ruin probability increases with  $|Q|$. There is an abrupt 
change of shape as we increase $ns$ from four to five, in this case from U-shape to S-shape. We call this a phase transition phenomenon. 
The remainder of this section provides an explanation of this phenomenon under a regime in which the underlying network is moderately dense, so that the edge probabilities are $O(d^{-\beta})$ for some $\beta \in (0,1).$

In view of this phase transition, we want to examine \eqref{ruinexpression} as a function of $|Q|$, with the choice of $r^Q$ as in \eqref{rQ}.  If $\mathbb{P}(P^Q<1) = 0$, then $\log\Psi^Q(u) = 0,\forall |Q|$. The bottom two panels in Figure \ref{phase transition} show that for large $Q$, $\mathbb{P}(P^Q<1)$ is very close to $ 0$. To explain the top two panels, using the delta method,  it can be seen that  
if $\mathbb{P}(P^Q<1)=1$, then $\log\Psi^Q(u)$ is approximately a quadratic function in $|Q|$.
Hence
the term $\mathbb{P}(P^Q<1)$ plays an important role in  
determining the shape of $\Psi^Q(u)$. Thus, this paper focuses on $\mathbb{P}(P^Q<1)$. 

For each $j$, let $I_j = \mathbbm{1}\{Q\sim j\}$, $\rho_j = \lambda\frac{\mu_j}{c_j}$ and $\xi_j = \frac{1}{\rho_j}$. Then with \eqref{ruinparameter},
$$
    \mathbb{P}(P^Q<1) = \mathbb{P}\left(\frac{\sum_{j=1}^d I_j}{\sum_{j=1}^d \xi_jI_j} <1\right) = \mathbb{P}\left( \sum_{j=1}^d (\xi_j-1)I_j \geq 0\right),
$$
where $I_j \sim Ber(\mathbb{P}(Q\sim j))$ are identically distributed but not independent. 
This probability is
difficult to compute exactly when the number of objects $d$ is large. Instead, using Stein's method we derive a normal approximation for $\sum_{j=1}^d (\xi_j-1)I_j$, together with a bound on the approximation.


To this purpose we introduce more notation. 
Given a subset $Q$ of agents and an object $j$, denote the type configuration on $Q\cup \{j\}$ by $$C^Q_j = \{s(i)\}_{i\in Q} \cup \{t(j)\} \mbox{ and let } 
C^Q := \bigcup_{j=1}^d C^Q_j = \{s(i)\}_{i\in Q} \cup \{t(j)\}_{j=1}^d$$
denote the type configuration on $Q$ {and all objects}.
In particular, a realisation $c$ of $C^Q$ induces a realisation $c_j =\{s(i)\}_{i\in Q} \cup \{t(j)\} $ for each $C^Q_j$.
  We let $$ p(c_j):= \mathbb{P}(Q\sim j|C^Q_j =c_j) = 1-\prod_{i\in Q} (1-p_{s(i)t(j)})$$
  and
  $$  \sigma^2(c) := \sum_{j=1}^d (\xi_j-1)^2  p(c_j)(1-p(c_j) ).$$ 
  Moreover, $\Psi$ denotes the standard normal cumulative distribution function.

\begin{theorem}[Mixture of normal approximation]
\label{mixture bound} For each realisation $c$ of $C^Q$ let 
 $N(c)$ be normal distributed with mean $\sum_{j=1}^d (\xi_j-1)p(c_j)$ and variance  $\sigma^2(c) > 0 $ and let $N = N(c)$ with probability $\mathbb{P}(C^Q=c)$.
 Then 
\begin{align}\label{thebound}
    |\mathbb{P}(P^Q<1)-\mathbb{P}(N > 0)| \leq 9.4\sum_c \mathbb{P}(C^Q=c)\sum_{j=1}^d \mathbb{E}[|Z_j(c)|^3]
\end{align} 
where $Z_j(c) := \frac{1}{\sigma (c) }(\xi_j-1)(I_j-p(c_j))$.
\end{theorem}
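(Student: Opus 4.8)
The plan is to condition on the type configuration $C^Q$, which turns the dependent edge indicators into independent ones, thereby reducing the statement to a Berry--Esseen estimate for a sum of independent random variables, and then to average the resulting per-configuration bound back over all configurations.

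First I would set up the conditional reduction. Recall the identity $\mathbb{P}(P^Q<1)=\mathbb{P}\big(\sum_{j=1}^d(\xi_j-1)I_j\ge 0\big)$ established above, and write $S:=\sum_{j=1}^d(\xi_j-1)I_j$. Conditionally on $\{C^Q=c\}$ the indicators $I_j=\mathbbm{1}\{Q\sim j\}$ are independent Bernoulli with parameters $p(c_j)$ --- this is exactly the conditional independence built into the stochastic blockmodel. Hence, given $c$, $S$ is a sum of independent terms with conditional mean $m(c):=\sum_{j=1}^d(\xi_j-1)p(c_j)$ and conditional variance $\sigma^2(c)$, and its standardization $W(c):=\sigma(c)^{-1}(S-m(c))=\sum_{j=1}^d Z_j(c)$ is a sum of independent, mean-zero summands with $\sum_{j=1}^d\operatorname{Var}(Z_j(c))=1$. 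Here the expectation defining $\mathbb{E}[|Z_j(c)|^3]$ is understood with $I_j\sim\mathrm{Ber}(p(c_j))$, i.e. conditionally on $\{C^Q=c\}$.

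Second I would express both probabilities through the same threshold and assemble the averaging. Since $\{S\ge 0\}=\{W(c)\ge x_c\}$ with $x_c:=-m(c)/\sigma(c)$, and since $N(c)\sim\mathcal{N}(m(c),\sigma^2(c))$ gives $\mathbb{P}(N(c)>0)=\Phi(m(c)/\sigma(c))=1-\Phi(x_c)$ where $\Phi$ is the standard normal c.d.f., the conditional discrepancy equals $|\mathbb{P}(W(c)<x_c)-\Phi(x_c)|$, using continuity of $\Phi$ so that the strict/non-strict distinction at the threshold is immaterial; this is at most the conditional Kolmogorov distance $\sup_x|\mathbb{P}(W(c)\le x)-\Phi(x)|$. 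By the law of total probability $\mathbb{P}(P^Q<1)=\sum_c\mathbb{P}(C^Q=c)\,\mathbb{P}(S\ge 0\mid C^Q=c)$, and by the definition of the mixture $\mathbb{P}(N>0)=\sum_c\mathbb{P}(C^Q=c)\,\mathbb{P}(N(c)>0)$, so a triangle inequality reduces the whole claim to the per-configuration Kolmogorov bound.

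Third --- and this is the crux --- I would supply the conditional Berry--Esseen inequality $\sup_x|\mathbb{P}(W(c)\le x)-\Phi(x)|\le 9.4\sum_{j=1}^d\mathbb{E}[|Z_j(c)|^3]$ for sums of independent summands via Stein's method. For the bounded solution $f_x$ of the Stein equation $f_x'(w)-wf_x(w)=\mathbbm{1}\{w\le x\}-\Phi(x)$ one writes $\mathbb{P}(W(c)\le x)-\Phi(x)=\mathbb{E}[f_x'(W(c))-W(c)f_x(W(c))]$ and expands the second term by a leave-one-out decomposition exploiting the independence of the $Z_j(c)$ and the normalization $\sum_j\operatorname{Var}(Z_j(c))=1$. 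The genuine difficulty is the non-smoothness of the indicator test function: controlling the remainder requires a concentration inequality for $W(c)$ near the threshold, and it is the explicit solution bounds for $f_x$ combined with this concentration step that yield the numerical constant $9.4$ after collecting the third-moment terms $\mathbb{E}[|Z_j(c)|^3]$. I expect this Stein/concentration estimate with its explicit constant to be the main obstacle, the conditioning and averaging around it being routine; substituting it into the triangle inequality of the previous step gives exactly \eqref{thebound}, with $\sigma^2(c)>0$ ensuring $W(c)$ and the $Z_j(c)$ are well defined.
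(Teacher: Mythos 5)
Your proposal is correct and follows essentially the same route as the paper: condition on the type configuration to make the edge indicators independent, standardise the sum, apply the Berry--Esseen bound with constant $9.4$ in Kolmogorov distance, and average over configurations. The only difference is that the paper simply cites this Berry--Esseen inequality (Theorem 3.6, p.~54, in the Chen--Goldstein--Shao reference) rather than re-deriving it via the Stein equation and concentration step you sketch, so the part you identify as the main obstacle is handled by citation.
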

\begin{proof}
For each  $c$,
$
\mathbb{P}(N(c) > 0) =  \Psi\left(\sum_{j=1}^d (1-\xi_j)p(c_j) / \sigma(c) \right)
$
and 
\begin{align*}
    \mathbb{P}(P^Q<1|C^Q=c) 
    &=\mathbb{P}\left(\sum_{j=1}^d(\xi_j-1)I_j\geq 0  \big\vert \, C^Q=c\right) \\
   & = -\mathbb{P}\left( W({c}) \leq \frac{1}{\sigma(c)}\sum_{j=1}^d (1-\xi_j)p(c_j)  \right) 
\end{align*}
where $W(c) := \sum_{j=1}^d Z_j(c)$.
By construction, $\mathbb{E}[W({c})]=0$ and $Var(W({c})) =1$. Moreover, given the type configuration, the edge indicators are independent.  By invoking Theorem 3.6, p.54, in \cite{chen}, it follows that 
\begin{equation}
     \sup_{z\in \mathbb{R}}|\mathbb{P}(W({c})\leq z)-\Psi(z)| \leq 9.4\sum_{j=1}^d \mathbb{E}[|Z_j({c})|^3].
 \end{equation} 
Conditioning on the type configuration, 
the claim is now immediate.
\end{proof}

Next we explore the behaviour of the bound. 

\begin{corollary}
\label{asymptotic}
Under the stochastic block model defined in Section \ref{setting}, suppose that $Q$ is fixed, $p_{kl} = \mathcal{O}(d^{-\beta})$  for all $k,l$, with $\beta \in (0,1)$, and there exist constants $0 < D_1 <D_2$ such that for all $j$, $D_1\leq |\xi_j-1| \leq D_2$. Then 
$
\lim_{d\to\infty} |\mathbb{P}(P^Q < 1)-\mathbb{P}(N > 0)| = 0.
$

Assume further that there is a constant $0 <  B < \infty$ such that  for all configurations $c$, $
{p(c_j)} \le B d^{-\beta}$, with $\beta \in (0,1)$. Moreover assume that there exist constants $m < 0 < M$ such that 
for every realisation $c$ of $C^Q$, there exist $\mu (c) \in (-\infty, m) \cup (M, \infty) $ such that $$\lim_{d\to\infty}\left|  \frac{1}{d} \sum_{j=1}^d d^{\beta} (\xi_j-1) p(c_j) - \mu (c) \right| = 0 $$ uniformly for all $c$. 
then
\begin{equation} \label{secondassertion} 
    \lim_{d\to \infty} |  \mathbb{P}(P^Q<1) - \mathbb{P}(\mu( C^Q) >0) | =0.
\end{equation}
\end{corollary}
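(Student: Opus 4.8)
The plan is to reduce everything to a single Gaussian tail that is uniform over configurations, using the mixture-normal bound of Theorem~\ref{mixture bound} for the first assertion and a direct analysis of $\mathbb{P}(N>0)$ for \eqref{secondassertion}. Throughout I would write $m(c):=\sum_{j=1}^d(\xi_j-1)p(c_j)$ for the mean of $N(c)$ and $S(c):=\sum_{j=1}^d p(c_j)$. For the first assertion I would start from \eqref{thebound} and show its right-hand side vanishes: using $\mathbb{E}|I_j-p(c_j)|^3\le p(c_j)$ and $|\xi_j-1|\le D_2$ gives $\sum_{j=1}^d\mathbb{E}[|Z_j(c)|^3]\le D_2^3 S(c)/\sigma^3(c)$, and bounding $\sigma^2(c)\ge \tfrac12 D_1^2 S(c)$ for large $d$ (since $p(c_j)\to 0$) turns this into a constant multiple of $S(c)^{-1/2}$; as the $p_{kl}$ are of order $d^{-\beta}$ one has $S(c)\to\infty$ uniformly in $c$, so the bound tends to $0$.

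For \eqref{secondassertion} I would use the triangle inequality
\[
|\mathbb{P}(P^Q<1)-\mathbb{P}(\mu(C^Q)>0)|\le |\mathbb{P}(P^Q<1)-\mathbb{P}(N>0)|+|\mathbb{P}(N>0)-\mathbb{P}(\mu(C^Q)>0)|,
\]
where the first term vanishes by the first assertion. Writing $N=N(c)$ with weight $\mathbb{P}(C^Q=c)$ and using $\mathbb{P}(N(c)>0)=\Psi(m(c)/\sigma(c))$ together with $\mathbb{P}(\mu(C^Q)>0)=\sum_c\mathbb{P}(C^Q=c)\mathbbm{1}\{\mu(c)>0\}$, the second term is at most $\sum_c\mathbb{P}(C^Q=c)\,|\Psi(m(c)/\sigma(c))-\mathbbm{1}\{\mu(c)>0\}|$, so it suffices to make the summand small uniformly in $c$.

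The crux is a uniform bound on $m(c)/\sigma(c)$. Set $\delta:=\min\{M,-m\}>0$, so $|\mu(c)|\ge\delta$ for all $c$. The uniform convergence $d^{\beta-1}m(c)\to\mu(c)$ gives, for all large $d$ and uniformly in $c$, both $|m(c)|\ge\tfrac{\delta}{2}d^{1-\beta}$ and $\mathrm{sign}\,m(c)=\mathrm{sign}\,\mu(c)$. On the other hand $p(c_j)\le B d^{-\beta}$ forces $S(c)\le B d^{1-\beta}$, whence $\sigma^2(c)\le D_2^2 S(c)\le D_2^2 B\,d^{1-\beta}$. Combining, $|m(c)/\sigma(c)|\ge c_0 d^{(1-\beta)/2}$ with $c_0:=\delta/(2D_2\sqrt B)$, uniformly in $c$, and the ratio carries the sign of $\mu(c)$. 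Hence $|\Psi(m(c)/\sigma(c))-\mathbbm{1}\{\mu(c)>0\}|\le 1-\Psi(c_0 d^{(1-\beta)/2})$ for every $c$; summing against $\mathbb{P}(C^Q=c)$ bounds the second term by $1-\Psi(c_0 d^{(1-\beta)/2})\to0$, proving \eqref{secondassertion}.

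The main obstacle is precisely this uniformity: the support of $C^Q$ has size $K^{|Q|}L^d$, which grows with $d$, so one cannot pass the pointwise limit $\Psi(m(c)/\sigma(c))\to\mathbbm{1}\{\mu(c)>0\}$ through the $c$-sum without a configuration-free rate. Every hypothesis feeds into this: the separation $\mu(c)\in(-\infty,m)\cup(M,\infty)$ keeps $|m(c)|$ of exact order $d^{1-\beta}$ from below, $p(c_j)\le Bd^{-\beta}$ keeps $\sigma(c)$ of order $d^{(1-\beta)/2}$ from above, and the uniform convergence to $\mu(c)$ lets a single threshold $d$ serve all $c$ for the sign-matching. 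I would also double-check the harmless technical points that $\sigma^2(c)>0$ (so the ratios are defined) and that the $o(1)$ in $d^{\beta-1}m(c)-\mu(c)$ is genuinely $c$-free, both of which are guaranteed by the stated assumptions.
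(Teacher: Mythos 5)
Your proposal is correct and takes essentially the same route as the paper: the first assertion by showing the right-hand side of \eqref{thebound} is $\mathcal{O}\bigl((\sum_j p(c_j))^{-1/2}\bigr)=\mathcal{O}(d^{-(1-\beta)/2})$ uniformly in $c$, and \eqref{secondassertion} by the same triangle inequality together with a configuration-free lower bound of order $d^{(1-\beta)/2}$ on $|\sum_j(\xi_j-1)p(c_j)|/\sigma(c)$, obtained from the separation of $\mu(c)$ away from $0$ and from $p(c_j)\le B d^{-\beta}$, followed by a Gaussian tail estimate. The only caveat, which you share with the paper's own argument, is that both treat $p_{kl}=\mathcal{O}(d^{-\beta})$ as an exact order (a matching lower bound is needed for $\sum_j p(c_j)\to\infty$ and hence for the Berry--Esseen term to vanish).
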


\begin{proof}
It is easy to show that 
$$\mathbb{E}[|Z_j({c})|^3] = \frac{|\xi_j-1|^3}{\sigma^3(c)}\left(p(c_j)(1-p(c_j))^3 + (1-p(c_j)p(c_j)^3)\right).$$
If for every $k,l$, $p_{kl} = \mathcal{O}(d^{-\beta})$ with $\beta \in (0,1)$, then for fixed $Q$, 
\begin{equation}
\label{asymp_prob}
    p(c_j) = 1-\prod_{i\in Q}(1-p_{s(i)t(j)}) = \sum_{i\in Q} p_{s(i)t(j)} + h.o.t = \mathcal{O}(d^{-\beta}).
\end{equation}
As for all $j$, $D_1\leq |\xi_j-1| \leq D_2$,  the bound \eqref{thebound} in Theorem \ref{mixture bound} gives,
\begin{align*}
    9.4&\sum_c \mathbb{P}(C^Q=c)\frac{\sum_{j=1}^d |\xi_j-1|^3(p(c_j)(1-p(c_j))^3 + (1-p(c_j))p(c_j)^3)}{[\sum_{j=1}^d (\xi_j-1)^2 p(c_j)(1-p(c_j))]^{3/2}}\\
    & \leq \frac{9.4 D_2^3}{D_1^3}\sum_c \mathbb{P}(C^Q=c)\frac{\sum_{j=1}^d (p(c_j)(1-p(c_j))^3 + (1-p(c_j))p(c_j)^3)}{[\sum_{j=1}^d  p(c_j)(1-p(c_j))]^{3/2}}\\
    &= \mathcal{O}(d^{-(1-\beta)/2})
\end{align*}
which tends to zero as $d$ tend to infinity. The first assertion follows.

Next, note that
$
    \mathbb{P}(N(c)> 0)
     = \Psi\left({d^{1-\beta} \frac{1}{d}\sum_{j=1}^d d{^\beta} (\xi_j-1)p(c_j)}/ {{\sigma(c)} }\right).  
$
{Due to uniform convergence,  if $\mu(c)> M $ then there exists a $D$ such that for all $d> D$ and for all  $c$,} $\left|\frac1d \sum_{j=1}^d d^{\beta}(\xi_j-1)p(c_j) - \mu(c)\right| \leq \frac{M}{2}$, and thus 
$$
\frac1d \sum_{j=1}^d d^{\beta}(\xi_j-1)p(c_j) \geq \mu(c) - \frac{M}{2} > \frac M 2.
$$

As  $
    \Psi( t) \ge 1 -  \min \left\{ \frac12, \frac{1}{t \sqrt{2\pi} }  e^{-t^2/2} \right\} \ge  1 -  \min \left\{ \frac12, \frac{1}{t \sqrt{2\pi} } \right\} $  for any $t >0$, (see  Eq. (2.11), p.16, in \cite{chen}), if $\mu(c) > M$ then for $d > D$, 
\begin{align*}
    \mathbb{P}(N(c) >   0) 
    &   \ge 1-   \min  \left\{ \frac12, \frac{\sigma (c) }{ d^{1 - \beta} \sqrt{2\pi} \frac{1}{d}\sum_{j=1}^d (\xi_j-1)p(c_j) d^\beta  } 
    \right\}\\
    & {\geq 1- \min \left\{ \frac12, \frac{2 D_2 \sqrt{B} d^{(1-\beta)/2}}{d^{1-\beta} \sqrt{2\pi} M}\right\}} 
\end{align*}
{where the last inequality is due to $\sigma(c) \leq D_2 \sqrt{B}  (d^{(1-\beta)/2})$; }
this bound is independent of $c$. 
As  $\frac{ D_2 \sqrt{{B}} d^{(1-\beta)/2}}{ d^{1 - \beta} \sqrt{2\pi} M } \rightarrow 0 $  for $d \rightarrow \infty$,  
$  \mathbb{P}(N(c) >   0)  \rightarrow 1$ uniformly for all configurations $c$ such that 
$\frac{1}{d}\sum_{j=1}^d  d^{\beta} (\xi_j-1)p(c_j)  \rightarrow \mu(c) >  M $. Similarly we can show that 
$  \mathbb{P}(N(c) >   0)  \rightarrow 0$ uniformly for all configurations $c$ such that 
$\frac{1}{d}\sum_{j=1}^d d^{\beta} (\xi_j-1)p(c_j)  \rightarrow \mu(c) < m$.
Thus, 
\begin{align*}
\lefteqn{\left|   \mathbbm{P}(N > 0) - \mathbbm{P}( \mu (C^Q) > 0) \right| } \\
    & =   \sum_c \mathbbm{P}(C^Q = c) \left\{ \mathbbm{I}( \mu (c) > 0) ( 1 -  \mathbbm{P}(N (c) > 0) )  
    + \mathbbm{I}( \mu (c) < 0)  \mathbbm{P}(N (c) > 0)  \right\} 
\end{align*}
which tends to 0 as $d \rightarrow \infty$.  The triangle inequality combined with the first assertion  gives \eqref{secondassertion}. 

\end{proof}



\begin{example}[A bipartite Bernoulli network] If  there is only one type for both the agents and objects in the stochastic block model, then there is only one type configuration, ${c^*}$, and the model reduces to a bipartite Bernoulli network with  edge probability $p=p_{11}$. Assume that 
$\lim_{d\to\infty} p d^{\beta} = A > 0$. Then from \eqref{asymp_prob}, $\lim_{d\to\infty} p({c^{*}}) d^{\beta} = A|Q|$.
If there exists $E \in \mathbbm{R} \setminus \{0\}$ 
such that 
$\lim_{d\to \infty} \frac1d \sum_{j=1}^d (\xi_j-1) = E$, then the assumptions of Corollary \ref{asymptotic} are satisfied, and, as $d \rightarrow \infty,$ 

\begin{equation*}
    \mathbb{P}(P^Q<1) \to
	\begin{cases}
	1 & \text{if } E >0;\\
	0 & \text{if } E <0.
	\end{cases}
\end{equation*}
Table \ref{table} illustrates the bound and the approximation when  
the number of objects is $d=100,000$, $|Q|=100$, $\beta = \frac{1}{2}$, $p = d^{-\beta}$, $\lambda =1$, $\mu_j = 1$ for all $j$ and $c_j = 0.95$ or $1.05$. In the columns of Table \ref{table}, $\textbf{ns} = \sum_{j=1}^d \mathbbm{1}\{c_j=0.95\}$ is the number of objects with $c_j=0.95$ (which in the classical Cram\'{e}r-Lundberg model would lead to ruin), \textbf{bound} is calculated from Theorem \ref{mixture bound}, \textbf{approximation} is the value of $\mathbb{P}(N>  0)$, \textbf{estimate} is a Monte Carlo estimate of $\mathbb{P}(P^Q<1)$ and \textbf{abs difference} is the absolute difference between approximation and estimate.
For
$ns<50,000$, we have
$\sum_{j=1}^d (\xi_j-1) /d > 0$ and  for $ns>50,000$, we have $ \sum_{j=1}^d (\xi_j-1)/d < 0$.
In both regimes 
the absolute difference is always much smaller than the bound  obtained from Theorem \ref{mixture bound}, but this bound is of similar magnitude as the error bars from the Monte Carlo simulations, with the latter not giving a theoretical guarantee.
\begin{center}
\begin{table}
\caption{Behaviour of the bounds in a bipartite Bernoulli graph }\label{table}
    \begin{tabular}{|c|c|c|c|c|} 
    \hline
    $ns$ & bound  & approximation & estimate ($\pm 2\times $ s.d.) & abs difference\\
    \hline
    49000 & 0.040 & 1.000 & 1.000 & 0 \\
    49500 & 0.040 & 0.973 & $0.979 (\pm 0.009)$ & 0.006 \\
    49900 & 0.040 & 0.650 & $0.659 (\pm 0.030)$ & 0.008 \\
    50000 & 0.040 & 0.500 & $0.504 (\pm 0.032)$ & 0.004 \\
    50100 & 0.040 & 0.350 & $0.354 (\pm 0.030)$ & 0.004 \\
    50500 & 0.040 & 0.027 & $0.018 (\pm 0.008)$ & 0.009 \\
    51000 & 0.040 & 0.000 & 0.000 & 0\\
    \hline
    \end{tabular}
    \end{table} 
\end{center}
Moreover, Corollary \ref{asymptotic} can be used to explain the changing shape in  Figure \ref{phase transition}. In the setting for this figure, for  $ns < 5$ we have $\sum_{j=1}^d (\xi_j-1)/d > 0$.
Approximately, then, 
$\mathbb{P}(N >  0) > 0.5$, and by Theorem \ref{mixture bound}, $\mathbb{P}(P^Q < 1) > 0.5$.
Thus,  in \eqref{ruinexpression}, the first term, including the expectation, dominates, in a limiting regime under which the bound in Theorem \ref{mixture bound} is small. Using the delta method, the logarithm of this expectation is approximately a quadratic function in ${|}Q{|}$ and thus  
results in the "U-shape". In contrast, when $ns > 5$, approximately,  $\mathbb{P}(N > 0) < 0.5$, and the expression is dominated by $\mathbb{P}(P^Q \ge 1)$, which for large ${|}Q{|}$ results, in a straight line, hence the "S-shape".

\end{example} 

\begin{remark}When $\beta \ge1$ the bound in {Theorem \ref{mixture bound}} does not converge to zero as $d \rightarrow \infty$. For sparse networks a Poisson approximation may be more appropriate, approximating 
the collection of dependent Bernoulli edge indicators   
by a collection of independent Poisson variables
with matching means.
As conditional on the type configuration, the indicators are independent,  Theorem 1.A from \cite{barbour1992poisson}   gives a bound in total variation distance for this approximation; 
if $p_{kl} = \mathcal{O}(d^{-1})$ for all $k,l$, 
then this bound goes to zero as $d$ tends to infinity. Studying the existence of a corresponding phase transition in this regime remains an open problem. 


\end{remark} 

\medskip
{\bf Acknowledgement.} The author would like to thank Gesine Reinert for her guidance and contribution by taking part in many helpful discussions about the results, and for her comments on the manuscript.

\bibliographystyle{elsarticle-harv} 
\bibliography{ref}






\end{document}